\newtheorem{thm}{Theorem}
\newtheorem*{thm**}{Theorem}
\theoremstyle{definition}
\newtheorem*{rmk}{Remark}
\DeclareMathOperator{\Ext}{Ext}
\DeclareMathOperator{\im}{im}
\title{A proof of the generalized geometric boundary theorem using filtered spectra}
\author{Sihao Ma}
\date{}
\begin{document}
\maketitle

\begin{abstract}
In \cite[Lem.~A.4.1]{good}, Behrens generalized the classical geometric boundary theorem \cite[Thm.~2.3.4]{rav86}. In this article, we will reformulate \cite[Lem.~A.4.1]{good} to fix a mistake made by Behrens, and prove it using the language of filtered spectra.
\end{abstract}

\section{Introduction}
The geometric boundary theorem was first introduced in \cite[Thm.~1.7]{jmwz} in the analysis of the generalized Adams spectral sequences, where they showed that the boundary homomorphisms between $\Ext$ groups have geometric interpretations.
\begin{thm**}[Johnson\textendash Miller\textendash Wilson\textendash Zahler]\label{convergence}
Let $E$ be a homotopy associative ring spectrum with unit such that $E_*$ is commutative and $E_*E$ is flat over $E_*$. Let $W\xrightarrow{f}X\xrightarrow{g}Y\xrightarrow{h}\Sigma W$ be a cofiber sequence of finite spectra with $E_*(h)=0$. If $\bar{x}\in\Ext_{E_*E}(E_*,E_*Y)$ converges to $x\in\pi_*Y$, then $\delta(\bar{x})$ converges to $h_*(x)\in\pi_*W$, where $\delta$ is the boundary homomorphism
\[\Ext_{E_*E}^{*,*}(E_*,E_*Y)\to\Ext_{E_*E}^{*+1,*}(E_*,E_*W).\]
\end{thm**}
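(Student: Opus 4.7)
The plan is to realize both the geometric and the algebraic boundary homomorphisms inside the category of filtered spectra and to deduce the theorem from naturality. For any spectrum $Z$, the $E$-based Adams tower $\cdots\to Z_2\to Z_1\to Z_0=Z$ presents $Z$ as a filtered spectrum $Z^\bullet$, and the associated spectral sequence is the standard $E$-Adams spectral sequence computing $\pi_*Z$ from $\Ext^{*,*}_{E_*E}(E_*,E_*Z)$. In this language, saying that $\bar x\in\Ext^{s,t}_{E_*E}(E_*,E_*Y)$ converges to $x\in\pi_{t-s}Y$ means that $x$ admits a lift $\tilde x\colon S^{t-s}\to Y_s$ whose class in the associated graded $\pi_{t-s}(Y_s/Y_{s+1})$ represents $\bar x$.

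The crucial step is to promote $h$ to a morphism of filtered spectra of the form
\[
\tilde h\colon Y^\bullet\longrightarrow (\Sigma W)^{\bullet+1}
\]
that shifts Adams filtration up by one and whose underlying map of spectra is $h$. Compatible choices of Adams resolutions make $W\to X\to Y$ a cofiber sequence of filtered spectra, and $\tilde h$ is the induced connecting map. Equivalently, at each stage $s$ one checks that the composite $Y_s\to Y\xrightarrow{h}\Sigma W$ lifts through $(\Sigma W)_{s+1}\to(\Sigma W)_s$; the obstruction to this lift is pulled back from $E\wedge\Sigma W$ via $E\wedge h$, hence vanishes by the hypothesis $E_*h=0$, which is also exactly what makes $0\to E_*W\to E_*X\to E_*Y\to 0$ a short exact sequence of $E_*E$-comodules.

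By functoriality, $\tilde h$ induces on $E_2$-pages a map
\[
\tilde h_*\colon\Ext^{s,t}_{E_*E}(E_*,E_*Y)\longrightarrow\Ext^{s+1,t}_{E_*E}(E_*,E_*W),
\]
and I would verify that this agrees with the algebraic boundary $\delta$ of the short exact sequence above. This identification is the principal obstacle of the argument: it requires unwinding the cobar description of $\Ext$ and tracing how the geometrically constructed lifts realize the algebraic connecting homomorphism, with care needed for signs and for the suspension shift between $E_*\Sigma W$ and $E_*W$. Granted this identification, the theorem follows at once from naturality of convergence for spectral sequences of filtered spectra: $\tilde h$ carries any representative $\tilde x\colon S^{t-s}\to Y_s$ of $x$ to a lift $S^{t-s}\to(\Sigma W)_{s+1}$ whose image in $\pi_*\Sigma W$ is $h_*(x)$ and whose class in the associated graded is $\tilde h_*(\bar x)=\delta(\bar x)$.
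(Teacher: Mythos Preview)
The paper does not give a standalone proof of this theorem; it is quoted as a known result from \cite{jmwz} in the introduction. What the paper does prove are Theorems~1--5, and the remarks after Theorems~3 and~5 observe that the classical statement is recovered from those results in the special case $\alpha=1$ (and $\alpha''=1$ for Theorem~3) applied to generalized Adams filtrations. So the relevant comparison is between your argument and the paper's Theorem~5 specialized to $\alpha=1$.

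Your approach is the classical one (essentially Bruner's refinement in \cite{bruner}): use $E_*h=0$ to build a global filtration-shifting map $\tilde h\colon Y^\bullet\to(\Sigma W)^{\bullet+1}$ of filtered spectra, identify the induced map on $E_2$-pages with the algebraic connecting homomorphism $\delta$, and conclude by naturality. This is correct, and it has the pleasant feature that you get a map of spectral sequences in all pages for free. The step you flag as the principal obstacle---identifying $\tilde h_*$ with $\delta$ via the cobar complex---is indeed where the work lies, but it is standard and your outline is adequate.

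The paper's route via Theorem~5 is genuinely different in mechanism. Rather than constructing a global $\tilde h$, the paper builds the intermediate objects $Y_{\alpha',\alpha}$ (the fibers interpolating between $Y\wedge C\tau^{\alpha}$ and $Z\wedge C\tau^{\alpha}$), lifts the given element $y$ to $y_{\infty,\alpha}\in\pi_{t,\mu}Y_{\infty,\alpha}$, and reads off both the detecting class in $E^1(X)$ and the element $\bar z\in\pi_*Z[\tau^{-1}]$ from a single commutative diagram. What this buys is uniformity: the same diagram-chase template handles the classical case $\alpha=1$ and the longer-differential cases $\alpha>1$ that appear in Behrens' generalized statement, and it never requires globally realizing $h$ as a map of towers. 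Your approach is cleaner for the classical theorem alone but does not obviously generalize to the situations of Theorems~2--4, where no single map of spectral sequences encodes the relationship.
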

Furthermore, Bruner showed in \cite{bruner} that the map $h$ induces not only the boundary homomorphism on the $E_2$-pages, but also a map between spectral sequences. It is summarized by Ravenel in \cite[Thm.~2.3.4]{rav86}.

Still, the naturality of spectral sequences and the classical geometric boundary theorem do not reveal all the information encoded in the fiber sequence. The differentials in the spectral sequences can be related in a more complicated way. Behrens generalized the classical geometric boundary theorem and proved it in \cite[Lem.~A.4.1]{good}. Heard, Li, and Shi also proved another generalized version of the classical geometric boundary theorem in \cite[Thm.~A.5]{hls}, which is the converse proposition of a special case of \cite[Lem.~A.4.1]{good}. Note that both of these two generalizations not only hold for the generalized Adams spectral sequences for fiber sequences, but also hold for spectral sequences for fiber sequences with compatible filtration towers.

In \cite[Lem.~A.4.1]{good}, Behrens claimed that there are five cases, among which one occurs. However, there is a mistake in the proof. In his discussion of the complement of Case (1), he claimed that there exists a lift of $y$ to $\tilde{y}$. In fact, this is equivalent to $p_*y'=0$ in $E^1_{t,\mu-\alpha}(Z)$, which is not guaranteed by the complement of Case (1): $[p_*y']=0$ in $E^{\alpha}_{t,\mu-\alpha}(Z)$. This might cause that none of the five cases occurs. Besides, the proof is hard to follow as there are many ad hoc constructions with complicated notations.

To fix these problems, in this article, we will reformulate \cite[Lem.~A.4.1]{good} by separating the five cases into five theorems and adding concrete conditions for them to happen. The language of filtered spectra comes in naturally when we state the results, and hence will also be used in our proof. Another benefit using filtered spectra is that we can get rid of the multiple complicated ad hoc constructions in Behrens' proof. The simplicity arising from the filtered spectra can be expected, since the bigraded homotopy groups of filtered spectra carry the information of differentials in different pages of a spectral sequence at the same time, which would be convenient when dealing with phenomena that happen in different pages, as in the generalized geometric boundary theorem.

\textbf{Conventions:} All diagrams in this article are considered to be diagrams in the homotopy category of filtered spectra $h\left(Sp^{\mathbb{Z}^{op}}\right)$ unless otherwise stated. The dashed arrows will denote the differentials in the spectral sequences. The double arrows will indicate that there is a spectral sequence from the homotopy groups of the source of double arrows converging to the homotopy groups of the target of double arrows. The arrows with the symbol $(-)$ will indicate that they are the inverses of the conventional maps with respect to the group structure of hom-groups of $h\left(Sp^{\mathbb{Z}^{op}}\right)$. The symbols on the sides of the diagram will indicate elements in the homotopy groups of the corresponding objects that are compatible under the diagram.

For an element $x$ in the $E^1$-page of a spectral sequence, if it does not support non-trivial differentials before the $E^{\alpha}$-page, we will use $[x]$ to denote the class $x$ represents in the $E^{\alpha}$-page.

\textbf{Acknowledgment:} The author would like to thank Mark Behrens for helpful conversations.

\section{An Introduction to filtered spectra}
In this section, we provide a brief introduction to filtered spectra following \cite{gikr}, and show some properties to be used later. We would like to mention that Pstr\k{a}gowski's work \cite{pst} on synthetic spectra provides another model which would also be beneficial when we restrict our attention to the generalized Adams spectral sequences for fiber sequences. By \cite[Cor.~6.1]{tl}, the category of $E$-synthetic spectra is equivalent to the category of modules over a filtered ring spectrum, which is a subcategory of the category of filtered spectra.

Let $Sp$ be the $\infty$-category of spectra. The category of filtered spectra is $Sp^{\mathbb{Z}^{op}}$, and a filtered spectrum can be denoted by
\[X=\cdots\to X_1\to X_0\to X_{-1}\to\cdots,\]
where $X_i\in Sp$. Note that homotopy limits and homotopy colimits are computed pointwise in this diagram category.

In $Sp^{\mathbb{Z}^{op}}$, there is a family of bigraded spheres
\[S^{t,\mu}=\cdots\to *\to *\to S^t\xrightarrow{\textup{id}}S^t\to\cdots,\]
where $S^{t,\mu}_{\alpha}=*$ for $\alpha>\mu$, and $S^{t,\mu}_{\alpha}=S^t$ for $\alpha\le\mu$. Then the bigraded homotopy groups of a filtered spectrum $X$ are given by
\[\pi_{t,\mu}X=\left[S^{t,\mu},X\right]=\pi_tX_\mu.\]

Note that there is a class $\tau\in\pi_{0,-1}S^{0,0}$ represented by the map $S^{0,-1}\to S^{0,0}$:
\[
\xymatrix{
\cdots \ar[r] & \ast \ar[r] \ar[d] & \ast \ar[r] \ar[d] & S^0 \ar[r] \ar[d] & \cdots \\
\cdots \ar[r] & \ast \ar[r] & S^0 \ar[r] & S^0 \ar[r] & \cdots.
}
\]
For $\alpha\ge0$, let $C\tau^{\alpha}$ be the cofiber of the map $\tau^{\alpha}:S^{0,-\alpha}\to S^{0,0}$. Applying Verdier's octahedral axiom to $S^{0,-\alpha-\beta}\xrightarrow{\tau^{\beta}}S^{0,-\alpha}\xrightarrow{\tau^{\alpha}}S^{0,0}$, we have
\[
\xymatrix@!C{
S^{0,-\alpha-\beta} \ar[r] \ar[rd] & S^{0,-\alpha} \ar[rrr] \ar[d] & & & \Sigma^{0,-\alpha}C\tau^{\beta} \ar[lldd] \\
 & S^{0,0} \ar[rd] \ar[dd] & & & \\
 & & C\tau^{\alpha+\beta} \ar[ld] & & \\
 & C\tau^{\alpha}, & & & \\
}
\]
and thus the fiber sequence
\begin{equation}\label{tau}
\Sigma^{0,-\alpha}C\tau^{\beta}\to C\tau^{\alpha+\beta}\to C\tau^{\alpha}.
\end{equation}

A filtered spectrum $X$ induces a decreasing filtration on $\pi_t\varinjlim X_{\mu}$ given by
\[F^{\alpha}\pi_t\varinjlim X_{\mu}=\im\left(\pi_tX_\alpha\to\pi_t\varinjlim X_{\mu}\right).\]
This filtration induces a spectral sequence in $Sp$. Let $X[\tau^{-1}]$ denote the filtered spectrum
\[X[\tau^{-1}]=\varinjlim\left(X\xrightarrow{\tau}\Sigma^{0,1}X\xrightarrow{\tau}\Sigma^{0,2}X\xrightarrow{\tau}\cdots\right).\]
Then its homotopy groups $\pi_{t,\mu}X[\tau^{-1}]\cong\pi_t\varinjlim X_{\mu}$ is independent on $\mu$. It is straightforward that the filtration mentioned above is isomorphic to the filtration on $\pi_{t,\mu}X[\tau^{-1}]$ given by
\[F^{\alpha}\pi_{t,\mu}X[\tau^{-1}]=\im\left(\pi_{t,\alpha}X\to\pi_{t,\alpha}X[\tau^{-1}]\right).\]
In order to avoid subtlety in the convergence of spectral sequences, for all filtered spectra $X$ appeared in this article, we will assume that for any fixed $t$, $\pi_{t,\mu}X=0$ for $\mu\gg0$, and $\pi_{t,\mu}X\to\pi_{t,\mu}X[\tau^{-1}]$ is an isomorphism for $\mu\ll0$. Then the spectral sequence
\[E^1_{t,\mu}=\pi_tX_{\mu}/X_{\mu+1}\Rightarrow\pi_t\varinjlim X_{\mu}\]
induced by $X$ is isomorphic to the $\tau$-Bockstein spectral sequence
\[E^1_{t,\mu}=\pi_{t,\mu}X\wedge C\tau\Rightarrow\pi_{t,\mu}X[\tau^{-1}].\]
From this point of view, any spectral sequence induced by a decreasing filtration in $Sp$ is isomorphic to a $\tau$-Bockstein spectral sequence in $Sp^{\mathbb{Z}^{op}}$.

This is not the first place where the $\tau$-Bockstein spectral sequence is compared with other spectral sequences. For instance, the generalized Adams spectral sequence is shown to be isomorphic to the $\tau$-Bockstein spectral sequence in the proof of \cite[Prop.~A.16]{bhs}. Furthermore, an explicit description of differentials in the generalized Adams spectral sequence using the boundary map of (\ref{tau}) is given in \cite[Thm.~9.19]{bhs}, which can be generalized to our situation. For an element $x\in E^1_{t,\mu}$ that survives to the $E^{\alpha}$-page, its target under $d_\alpha$ can be computed by first taking a lift of $-x$ in $\pi_{t,\mu}X\wedge C\tau^{\alpha}$, then mapping to $\pi_{t-1,\mu+\alpha}X\wedge C\tau=E^1_{t-1,\mu+\alpha}$ through the boundary map in the fiber sequence (\ref{tau}).

We end this section by introducing a construction that plays a key role in the proof of our main results.

Suppose there is a fiber sequence of filtered spectra
\[\Sigma^{-1,0}Z\xrightarrow{j}X\xrightarrow{i}Y\xrightarrow{p}Z.\]
By the boundedness assumption we made, let $C\tau^{\infty}$ be $S^{0,0}$. For $\alpha,\alpha'\in\mathbb N\cup\{\infty\}$, let $Y_{\alpha',\alpha}$ be the fiber of the composite map
\[Z\wedge C\tau^{\alpha'+\alpha} \to \Sigma^{1,0}X\wedge C\tau^{\alpha'+\alpha} \to \Sigma^{1,0}X\wedge C\tau^{\alpha}.\]
In particular, we have $Y_{0,\alpha}\simeq Y\wedge C\tau^{\alpha}$, and $Y_{\alpha,0}\simeq Z\wedge C\tau^{\alpha}$. The object $Y_{\alpha',\alpha}$ can be thought as an intermediate object between $Y\wedge C\tau^{\alpha'+\alpha}$ and $Z\wedge C\tau^{\alpha'+\alpha}$ constructed from the filtration on $X$, which carries more information on how the spectral sequences are compatible with the fiber sequences.

For $0\le\beta\le\alpha'$, applying Verdier's octahedral axiom to
\[Z\wedge C\tau^{\alpha'+\alpha}\to\Sigma^{1,0}X\wedge C\tau^{\alpha+\beta}\to\Sigma^{1,0}X\wedge C\tau^{\alpha},\]
we obtain the fiber sequence
\begin{equation}\label{diag}
\Sigma^{0,-\alpha}X\wedge C\tau^{\beta}\to Y_{\alpha'-\beta,\alpha+\beta}\to Y_{\alpha',\alpha}.
\end{equation}

For $\beta'\ge0$, applying Verdier's octahedral axiom to
\[Z\wedge C\tau^{\alpha'+\beta'+\alpha}\to Z\wedge C\tau^{\alpha'+\alpha}\to \Sigma^{1,0}X\wedge C\tau^{\alpha},\]
we obtain the fiber sequence
\begin{equation}\label{hor}
\Sigma^{0,-\alpha-\alpha'}Y_{\beta',0}\to Y_{\alpha'+\beta',\alpha}\to Y_{\alpha',\alpha}.
\end{equation}

\section{Statement of the main results}\label{sec3}
In this section, we will give the statement of the generalized geometric boundary theorem, while the proofs will be postponed to Section \ref{sec4}.

Suppose there is a fiber sequence of filtered spectra
\begin{equation}\label{fib}
\Sigma^{-1,0}Z\xrightarrow{j}X\xrightarrow{i}Y\xrightarrow{p}Z,
\end{equation}
which induces maps between the $\tau$-Bockstein spectral sequences. Let $y\in E^1_{t,\mu}(Y)$ be an element which survives to the $E^{\alpha}$-page and supports a differential
\[d_{\alpha}^Y([y])=[y'].\]

\begin{thm}\label{1}
There is a differential 
\[d_{\alpha}^Z([p_*y])=[p_*y'].\]
\[
\xymatrix{
y' \ar[rr]^{p_*} & & p_*y' & \\
& y \ar[lu]^{d_\alpha^Y} \ar[rr]^{p_*} & & p_*y. \ar[lu]_{d_\alpha^Z}}
\]
\end{thm}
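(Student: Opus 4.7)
The plan is to prove Theorem \ref{1} by applying $p_*$ to the data that computes $d_\alpha^Y([y])$, using the description of differentials in a $\tau$-Bockstein spectral sequence recalled at the end of Section 2. The essential input is that smashing with $C\tau^\beta$ is functorial and that the connecting map of the fiber sequence (\ref{tau}) is natural in the smash factor, so the map $p\colon Y\to Z$ carries the entire recipe computing $d_\alpha^Y$ to the analogous recipe for $d_\alpha^Z$.

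First, because $[y]$ survives to the $E^\alpha$-page and $d_\alpha^Y([y])=[y']$, the recipe provides a lift $\tilde y\in\pi_{t,\mu}(Y\wedge C\tau^\alpha)$ of $-y$ along the map $\pi_{t,\mu}(Y\wedge C\tau^\alpha)\to\pi_{t,\mu}(Y\wedge C\tau)$ induced by the quotient $C\tau^\alpha\to C\tau$ coming from (\ref{tau}), and the image of $\tilde y$ under the map $Y\wedge C\tau^\alpha\to\Sigma^{1,-\alpha}Y\wedge C\tau$ induced by the boundary of (\ref{tau}) with $\beta=1$ is $y'\in E^1_{t-1,\mu+\alpha}(Y)$.

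Next I would set $\widetilde{p_*y}:=p_*\tilde y\in\pi_{t,\mu}(Z\wedge C\tau^\alpha)$. Since $-\wedge C\tau^\alpha$ and $-\wedge C\tau$ are functors and $p_*$ is additive, the commutativity of
\[
\pi_{t,\mu}(Y\wedge C\tau^\alpha)\xrightarrow{p_*}\pi_{t,\mu}(Z\wedge C\tau^\alpha),\qquad \pi_{t,\mu}(Y\wedge C\tau)\xrightarrow{p_*}\pi_{t,\mu}(Z\wedge C\tau)
\]
with the quotient maps shows that $\widetilde{p_*y}$ is a lift of $-p_*y$; in particular $[p_*y]$ survives to the $E^\alpha$-page of the $Z$-spectral sequence. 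Naturality of the connecting map of (\ref{tau}) in the smash factor then identifies the image of $\widetilde{p_*y}$ under $Z\wedge C\tau^\alpha\to\Sigma^{1,-\alpha}Z\wedge C\tau$ with $p_*y'$, so the same recipe yields $d_\alpha^Z([p_*y])=[p_*y']$.

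There is no real obstacle in this argument: it reduces to the functoriality of smashing and of the connecting map of (\ref{tau}), together with the additivity of $p_*$, which together ensure that lifts, signs, and boundaries are all preserved on the nose. This theorem is therefore a sanity check and warm-up, to be contrasted with the later theorems in Section \ref{sec4}, where the intermediate filtered spectra $Y_{\alpha',\alpha}$ constructed at the end of Section 2 will be genuinely needed to encode the finer compatibilities between differentials on the three sides of the fiber sequence (\ref{fib}).
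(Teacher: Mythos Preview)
Your argument is correct and is essentially identical to the paper's proof: the paper draws the same naturality square, just written in the notation $Y_{0,\alpha}=Y\wedge C\tau^{\alpha}$ and $Y_{\alpha,0}=Z\wedge C\tau^{\alpha}$, and chases the lift $y_{\alpha}$ (your $\tilde y$) through it. The only cosmetic difference is that the paper encodes the sign via $(-)$ on the boundary arrows rather than lifting $-y$.
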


Let $y'\in\pi_{t-1,\mu+\alpha}Y_{0,1}$ be a representative of the class $[y']$ in the $E^1$-page. There are two cases making this differential be trivial: either $p_*y'=0$ in the $E^1$-page, or $p_*y'$ has been killed by a shorter differential in the previous pages. The rest of the theorems will focus on the former case.

Suppose $p_*y'=0$ in the $E^1$-page. Then there exists a lift $y_\alpha\in\pi_{t,\mu}Y_{0,\alpha}$ of $y$, which maps to $y'\in\pi_{t-1,\mu+\alpha}Y_{0,1}$, and maps to $0\in\pi_{t-1,\mu+\alpha}Y_{1,0}$. By the fiber sequence (\ref{hor}), we have a fiber sequence
\[Y_{1,\alpha}\to Y_{0,\alpha}\to \Sigma^{1,-\alpha}Y_{0,1}.\]
Therefore, $y_\alpha$ can be lifted to $y_{1,\alpha}\in\pi_{t,\mu}Y_{1,\alpha}$, and so can $y$. Theorem \ref{5} will deal with the case that $y\in\pi_{t,\mu}Y_{0,1}$ can be lifted along the map $Y_{\infty,\alpha}\to Y_{0,1}$, while Theorem \ref{2}, Theorem \ref{3}, and Theorem \ref{4} will deal with the case that $y$ cannot be lifted. Note that the condition that $y$ can be lifted along the map $Y_{\infty,\alpha}\to Y_{0,1}$ is a sufficient but not necessary condition for $p_*y$ to be a permanent cycle, which is equivalent to the condition that $y$ can be lifted along the map $Y_{\infty,1}\to Y_{0,1}$.

\begin{thm}\label{2}
Let $y\in E^1_{t,\mu}(Y)$ be an element that survives to the $E^{\alpha}$-page. Suppose that there exists a maximal $\alpha'\in\mathbb N_+$ such that $y$ can be lifted to $y_{\alpha',\alpha}\in\pi_{t,\mu}Y_{\alpha',\alpha}$. Then there exist elements 
\begin{align*}
x & \in E^1_{t-1,\mu+\alpha}(X), \\
z & \in E^1_{t-1,\mu+\alpha+\alpha'}(Z),
\end{align*}
where $[z]\ne0$ in the $E^{\alpha'+1}$-page, such that
\begin{align*}
& d_{\alpha}^Y([y])=[i_*x], \\
& d_{\alpha'}^X([x])=[j_*z], \\
& d_{\alpha+\alpha'}^Z([p_*y])=[z].
\end{align*}
\[\xymatrix{
j_*z & & & & z \ar[llll]_{j_*} & \\
 & x \ar[lu]^{d_{\alpha'}^X} \ar[r]^{i_*} & i_*x & & & & \\
 & & & y \ar[lu]^{d_{\alpha}^Y} \ar[rr]^{p_*} & & p_*y. \ar[luu]_{d_{\alpha+\alpha'}^Z} 
}\]
\end{thm}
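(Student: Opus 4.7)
The plan is to produce $x$ and $z$ as images of the maximal lift $y_{\alpha',\alpha}$ under connecting maps naturally associated with (\ref{diag}) and (\ref{hor}), and then to verify the three differential identities and the nontriviality of $[z]$ by tracking how these connecting maps interact with the $\tau$-Bockstein boundaries of $X$, $Y$, and $Z$. Explicitly, I define $z\in\pi_{t-1,\mu+\alpha+\alpha'}Z\wedge C\tau=E^1_{t-1,\mu+\alpha+\alpha'}(Z)$ to be the image of $y_{\alpha',\alpha}$ under the connecting map $Y_{\alpha',\alpha}\to\Sigma^{1,-\alpha-\alpha'}Z\wedge C\tau$ of (\ref{hor}) with $\beta'=1$, and I use (\ref{diag}) with $\beta=\alpha'$ to produce $\tilde x\in\pi_{t-1,\mu+\alpha}X\wedge C\tau^{\alpha'}$ as the image of $y_{\alpha',\alpha}$ under $Y_{\alpha',\alpha}\to\Sigma^{1,-\alpha}X\wedge C\tau^{\alpha'}$; then $x\in E^1_{t-1,\mu+\alpha}(X)$ is the reduction of $\tilde x$ along $C\tau^{\alpha'}\to C\tau$, and the lift $\tilde x$ certifies that $x$ survives to the $E^{\alpha'}$-page.

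The identities $d_\alpha^Y([y])=[i_*x]$ and $d_{\alpha+\alpha'}^Z([p_*y])=[z]$ are the more accessible two. For the latter, the defining map $Y_{\alpha',\alpha}\to Z\wedge C\tau^{\alpha+\alpha'}$ sends $y_{\alpha',\alpha}$ to a lift $w$ of $p_*y$ to filtration $\alpha+\alpha'$, and the Verdier octahedron producing (\ref{hor}) with $\beta'=1$ factors the connecting map as $Y_{\alpha',\alpha}\to Z\wedge C\tau^{\alpha+\alpha'}\to\Sigma^{1,-\alpha-\alpha'}Z\wedge C\tau$ with the second arrow being the $\tau$-Bockstein boundary of $Z$; hence (up to sign) $z$ is exactly what the recipe in Section 2 produces for $d_{\alpha+\alpha'}^Z([p_*y])$. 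For the former, I build the naturality square relating (\ref{diag}) with $\beta=1$ for $Y_{\alpha',\alpha}$ to the $\tau$-Bockstein boundary of $Y$ via the map $Y_{\alpha',\alpha}\to Y_{0,\alpha}$ of (\ref{hor}) and the inclusion $X\xrightarrow{i}Y$; chasing $y_{\alpha',\alpha}$ around this square identifies $i_*x$ with the $\tau$-Bockstein of the image of $y_{\alpha',\alpha}$ in $Y_{0,\alpha}$, which computes $d_\alpha^Y([y])$.

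The main obstacle is the identity $d_{\alpha'}^X([x])=[j_*z]$. My strategy is to observe that the image of $\tilde x$ under the natural inclusion $\Sigma^{0,-\alpha}X\wedge C\tau^{\alpha'}\to X\wedge C\tau^{\alpha+\alpha'}$ from (\ref{tau}) coincides with $\Sigma j_*(w)$, since both equal the image of $y_{\alpha',\alpha}$ under the structural map $Z\wedge C\tau^{\alpha+\alpha'}\xrightarrow{\Sigma j_*}\Sigma^{1,0}X\wedge C\tau^{\alpha+\alpha'}$ appearing in the Verdier octahedron defining (\ref{diag}) with $\beta=\alpha'$. Naturality of the $\tau$-Bockstein boundary with respect to $j_*$, combined with the compatibility of $\tau$-Bocksteins at different filtration levels under the inclusions $\Sigma^{0,-\alpha}C\tau^{\alpha'}\to C\tau^{\alpha+\alpha'}$ of (\ref{tau}), then yields
\[\partial_X(\tilde x)=\partial_X(\Sigma j_*(w))=\Sigma j_*(\partial_Z(w))=j_*z,\]
so $d_{\alpha'}^X([x])=[j_*z]$ on the $E^{\alpha'}$-page. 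Carefully assembling this chain of naturality squares in the homotopy category of filtered spectra is the technical crux.

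Finally, $[z]\ne 0$ in $E^{\alpha'+1}(Z)$ follows from the maximality of $\alpha'$. If $[z]$ were trivial on the $E^{\alpha'+1}$-page, there would exist $z''$ surviving to some $E^r$ with $r\le\alpha'$ and $d_r(z'')=z$; a suitable lift of $z''$ pushes via the inclusion $\Sigma^{0,-\alpha}Z\wedge C\tau^{\alpha'}\to Y_{\alpha',\alpha}$ from the fiber sequence $\Sigma^{0,-\alpha}Z\wedge C\tau^{\alpha'}\to Y_{\alpha',\alpha}\to Y_{0,\alpha}$ of (\ref{hor}) to an element of $\pi_{t,\mu}Y_{\alpha',\alpha}$ whose image in $Y_{0,\alpha}$ vanishes and whose image under the (\ref{hor}) $\beta'=1$ connecting map equals $z$; subtracting it from $y_{\alpha',\alpha}$ produces a new lift of $y$ that extends to $Y_{\alpha'+1,\alpha}$, contradicting maximality.
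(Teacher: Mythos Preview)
Your proposal is correct and follows essentially the same strategy as the paper: both construct $x$ and $z$ as images of the maximal lift $y_{\alpha',\alpha}$ under the connecting maps of (\ref{diag}) and (\ref{hor}), read off the three differentials from the resulting commutative diagram of $\tau$-Bockstein boundaries, and establish $[z]\ne0$ by pushing a hypothetical lift $z_{\alpha'}\in\pi_{t,\mu+\alpha}Z\wedge C\tau^{\alpha'}$ into $Y_{\alpha',\alpha}$ via (\ref{hor}) and subtracting to contradict maximality. The only notable difference is presentational: the paper packages all the naturality into one commutative diagram and asserts its commutativity, whereas you verify the identity $d_{\alpha'}^X([x])=[j_*z]$ by routing through the intermediate object $X\wedge C\tau^{\alpha+\alpha'}$ and invoking the octahedron defining (\ref{diag}) with $\beta=\alpha'$ together with compatibility of $\tau$-Bockstein boundaries along the inclusion $\Sigma^{0,-\alpha}C\tau^{\alpha'}\to C\tau^{\alpha+\alpha'}$; this is a legitimate and slightly more explicit way to justify the same square that the paper draws without comment.
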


Theorem \ref{3} and Theorem \ref{4} will deal with the case that $j_*z=0$ in Theorem \ref{2}.

\begin{thm}\label{3}
Let $y_{\alpha',\alpha}$ be as in Theorem \ref{2}, and $x_{\alpha'}$ be the image of $y_{\alpha',\alpha}$ under the map
\[\pi_{t,\mu}Y_{\alpha',\alpha}\to\pi_{t-1,\mu+\alpha}X\wedge C\tau^{\alpha'}\]
as constructed in the fiber sequence (\ref{diag}). Suppose that there exists a maximal $\alpha''\in\mathbb N_+$ such that $x_{\alpha'}$ can be lifted to an element in $\pi_{t-1,\mu+\alpha}X\wedge C\tau^{\alpha'+\alpha''}$. Then there exist elements
\begin{align*}
x & \in E^1_{t-1,\mu+\alpha}(X), \\
x' & \in E^1_{t-2,\mu+\alpha+\alpha'+\alpha''}(X), \\
y'' & \in E^1_{t-1,\mu+\alpha+\alpha'}(Y),
\end{align*}
where $[p_*y'']\ne0$ in the $E^{\alpha'+1}$-page and $[x']\ne0$ in the $E^{\alpha''+1}$-page, such that
\begin{align*}
& d_{\alpha}^Y([y])=[i_*x], \\
& d_{\alpha'+\alpha''}^X([x])=[x'], \\
& d_{\alpha''}^Y([y''])=[-i_*x'], \\
& d_{\alpha+\alpha'}^Z([p_*y])=[p_*y''].
\end{align*}
\[\xymatrix{
x' \ar[rr]^{-i_*} & & -i_*x' & & & & \\
 & & & y'' \ar[lu]_{d_{\alpha''}^Y} \ar[rr]^{p_*} & & p_*y'' & \\
 & x \ar[luu]^{d_{\alpha'+\alpha''}^X} \ar[rr]^{i_*} & & i_*x & & & \\
 & & & & y \ar[lu]^{d_{\alpha}^Y} \ar[rr]^{p_*} & & p_*y. \ar[luu]_{d_{\alpha+\alpha'}^Z}
}\]
\end{thm}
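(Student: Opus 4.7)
The plan is to leverage Theorem~\ref{2}, produce $x'$ from a boundary coming from (\ref{tau}), produce $y''$ as a $p_*$-preimage of $z$, and then verify the remaining differentials by a diagram chase. I would first invoke Theorem~\ref{2} to fix the lift $y_{\alpha',\alpha}$ together with the elements $x_{\alpha'}, x, z$ and the identities $d_\alpha^Y([y])=[i_*x]$ and $d_{\alpha+\alpha'}^Z([p_*y])=[z]$. A key byproduct of that proof is a double-boundary identification coming from the commutativity of (\ref{diag}) and (\ref{hor}) applied to $y_{\alpha',\alpha}$: up to sign, $j_*z\in\pi_{t-2,\mu+\alpha+\alpha'}X\wedge C\tau$ agrees with the image of $x_{\alpha'}$ under the boundary $X\wedge C\tau^{\alpha'}\to\Sigma^{1,-\alpha'}X\wedge C\tau$ coming from (\ref{tau}). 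Since $x_{\alpha'}$ lifts to $x_{\alpha'+\alpha''}$ with $\alpha''\ge 1$, this boundary vanishes, so $j_*z=0$. To construct $x'$ I would apply the boundary $X\wedge C\tau^{\alpha'+\alpha''}\to\Sigma^{1,-\alpha'-\alpha''}X\wedge C\tau$ from (\ref{tau}) to $-x_{\alpha'+\alpha''}$; the differential formula recalled earlier then gives $d_{\alpha'+\alpha''}^X([x])=[x']$, and the maximality of $\alpha''$ forces $[x']\ne 0$ in the $E^{\alpha'+\alpha''}$-page, hence in the $E^{\alpha''+1}$-page.

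The vanishing $j_*z=0$ together with the long exact sequence associated to (\ref{fib}) smashed with $C\tau$ would let me lift $z$ to some $y''\in\pi_{t-1,\mu+\alpha+\alpha'}Y\wedge C\tau$ with $p_*y''=z$, giving $[p_*y'']=[z]=d_{\alpha+\alpha'}^Z([p_*y])$ and the nonvanishing of $[p_*y'']$ in the $E^{\alpha'+1}$-page (inherited from Theorem~\ref{2}). To pin down the choice so that $d_{\alpha''}^Y([y''])=[-i_*x']$, I would apply (\ref{hor}) with $\beta'=\alpha''$ to $y_{\alpha',\alpha}$ to produce a refined element $z_{\alpha''}\in\pi_{t-1,\mu+\alpha+\alpha'}Z\wedge C\tau^{\alpha''}$ projecting to $z$, and argue by the same double-boundary comparison that the image of $z_{\alpha''}$ under $Z\wedge C\tau^{\alpha''}\to\Sigma X\wedge C\tau^{\alpha''}$ equals the image of $x_{\alpha'}$ under the boundary $X\wedge C\tau^{\alpha'}\to\Sigma^{1,-\alpha'}X\wedge C\tau^{\alpha''}$ from (\ref{tau}); this vanishes thanks to the lift $x_{\alpha'+\alpha''}$. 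Therefore $z_{\alpha''}$ lifts to $\tilde y''\in\pi_{t-1,\mu+\alpha+\alpha'}Y\wedge C\tau^{\alpha''}$, and I would replace $y''$ by the projection of $\tilde y''$ to $Y\wedge C\tau$ (adjusting by an $i_*$-term if needed, which leaves $[p_*y'']$ unchanged).

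The main obstacle will be the final identification: the boundary of $-\tilde y''$ under $Y\wedge C\tau^{\alpha''}\to\Sigma^{1,-\alpha''}Y\wedge C\tau$ must be shown to equal $-i_*x'$. I plan to handle this by assembling the large commutative diagram built from (\ref{tau}), (\ref{diag}), (\ref{hor}), and (\ref{fib}) smashed with the relevant $C\tau$-powers, and chasing $y_{\alpha',\alpha}$ and $x_{\alpha'+\alpha''}$ through both paths. The technical heart of the argument will be careful bookkeeping of the suspensions and signs from the several applications of Verdier's octahedral axiom, so that the two compositions produce matching representatives in $\pi_{t-2,\mu+\alpha+\alpha'+\alpha''}Y\wedge C\tau$.
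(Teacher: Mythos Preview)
Your overall strategy coincides with the paper's: assemble a large commutative diagram from the fiber sequences (\ref{tau}), (\ref{diag}), (\ref{hor}) and the cofiber sequence (\ref{fib}), and read off the four differentials by chasing $y_{\alpha',\alpha}$. The paper does this in one stroke rather than first invoking Theorem~\ref{2}, but that is only an organizational difference.

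There is, however, one genuine subtlety you are underestimating in the last step. You construct $x_{\alpha'+\alpha''}$ as an \emph{arbitrary} lift of $x_{\alpha'}$ and $\tilde y''$ as an \emph{arbitrary} lift of $z_{\alpha''}$, and then propose to prove $d_{\alpha''}^Y([y''])=[-i_*x']$ by ``chasing $y_{\alpha',\alpha}$ and $x_{\alpha'+\alpha''}$ through both paths.'' But lifts are not functorial: there is no path in the diagram from $y_{\alpha',\alpha}$ to $\tilde y''$, nor from $y_{\alpha',\alpha}$ to $x_{\alpha'+\alpha''}$, so commutativity alone cannot force the two boundaries to agree. Different choices of $\tilde y''$ change its boundary by an element of $i_*\bigl(\partial(\pi_{t-1,\mu+\alpha+\alpha'}X\wedge C\tau^{\alpha''})\bigr)$, and different choices of $x_{\alpha'+\alpha''}$ change $x'$ by an element of exactly the same shape; what must be shown is that \emph{compatible} choices exist. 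The paper handles precisely this point by invoking \cite[Lem.~5.1]{borel}: from the vanishing of the image of $y_{\alpha',\alpha}$ in $\pi_{t-2,\mu+\alpha'+\alpha}X\wedge C\tau^{\alpha''}$, that lemma produces an element of $\pi_{t-1,\mu+\alpha+\alpha'}Y_{0,\alpha''}$ simultaneously compatible with $y_{\alpha',\alpha}$ (downward, in $Y_{\alpha'',0}$) and with the chosen $x_{\alpha'+\alpha''}$ (upward, in $Y_{0,\alpha''+\alpha'+\alpha}$). So the ``technical heart'' is not sign bookkeeping but this compatible--lift statement; you will need to either quote such a lemma or reprove it for the map of fiber sequences $(\Sigma^{0,-\alpha}X\wedge C\tau^{\alpha'}\to Y_{0,\alpha'+\alpha}\to Y_{\alpha',\alpha})\longrightarrow(\Sigma^{1,-\alpha'-\alpha}X\wedge C\tau^{\alpha''}\to \Sigma^{1,-\alpha'-\alpha}Y_{0,\alpha''}\to \Sigma^{1,-\alpha'-\alpha}Y_{\alpha'',0})$.

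A minor point: the maximality of $\alpha''$ yields $[x']\ne0$ in the $E^{\alpha''+1}$-page directly, by the argument parallel to the end of the proof of Theorem~\ref{2}; your claim that it first gives $[x']\ne0$ in the $E^{\alpha'+\alpha''}$-page is stronger and is not what the argument actually produces (though the implication you draw from it is harmless).
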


\begin{rmk}
If $\alpha=\alpha''=1$, the lower two rows and the upper two rows give the construction of the algebraic boundary homomorphisms when we deal with the generalized Adams spectral sequences, and Theorem \ref{3} reduces to a part of the classical geometric boundary theorem.
\end{rmk}

\begin{thm}\label{4}
Let $x_{\alpha'}$ be as in Theorem \ref{3}. Suppose that $x_{\alpha'}$ can be lifted to an element in $\pi_{t-1,\mu+\alpha}X$. Then there exist elements
\begin{align*}
x & \in E^1_{t-1,\mu+\alpha}(X), \\
y'' & \in E^1_{t-1,\mu+\alpha+\alpha'}(Y), \\
\bar x & \in \pi_{t-1,\mu}X[\tau^{-1}],
\end{align*}
where $[p_*y'']\ne0$ in the $E^{\alpha'+1}$-page, such that
\begin{align*}
& d_{\alpha}^Y([y])=[i_*x], \\
& \bar x\begin{cases}\text{is detected by }x, & \text{if }x\text{ is not the target of a differential in }E^*_{*,*}(X), \\ \text{is in the image of }\pi_{t-1,\mu+\alpha}X, & \text{if }x\text{ is the target of a differential in }E^*_{*,*}(X),\end{cases}\\
& i_*\bar x\begin{cases}\text{is detected by }-y'', & \text{if }y''\text{ is not the target of a differential in }E^*_{*,*}(Y), \\ \text{is in the image of }\pi_{t-1,\mu+\alpha+\alpha'}Y, & \text{if }y''\text{ is the target of a differential in }E^*_{*,*}(Y),\end{cases}\\
& d_{\alpha+\alpha'}^Z([p_*y])=[p_*y''].
\end{align*}
\[\xymatrix{
\bar x \ar[rr]^{-i_*} & & -i_*\bar x & & & \\
 & & y'' \ar@{=>}[u] \ar[rr]^{p_*} & & p_*y'' & \\
x \ar@{=>}[uu] \ar[rr]^{i_*} & & i_*x & & & \\
 & & & y \ar[lu]^{d_\alpha^Y} \ar[rr]^{p_*} & & p_*y. \ar[luu]_{d_{\alpha+\alpha'}^Z}
}\]
\end{thm}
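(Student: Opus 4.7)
The strategy is to reuse the octahedral machinery from the proofs of Theorems \ref{2} and \ref{3}, specializing to the degenerate case where $x$ is replaced by the lift $\tilde x\in\pi_{t-1,\mu+\alpha}X$ given by the hypothesis. I first import from the proof of Theorem \ref{2} the lift $y_{\alpha',\alpha}\in\pi_{t,\mu}Y_{\alpha',\alpha}$ and the element $x_{\alpha'}\in\pi_{t-1,\mu+\alpha}X\wedge C\tau^{\alpha'}$ arising as the boundary of $y_{\alpha',\alpha}$ in (\ref{diag}) with $\beta=\alpha'$; its reduction modulo $\tau$ is the class $x$ witnessing $d_\alpha^Y([y])=[i_*x]$. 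I then define $\bar x\in\pi_{t-1,\mu}X[\tau^{-1}]$ to be the image of $\tilde x$ under localization; the two cases in the detection statement for $\bar x$ are then tautological from the definition of the $\tau$-Bockstein filtration on $\pi_{*,*}X[\tau^{-1}]$: if $x$ is not the target of a differential then $[x]$ is its own $E^\infty$-class detecting $\bar x$, and if $x$ is a target then $[x]=0$ on $E^\infty$ but $\bar x$ is in the image of $\pi_{t-1,\mu+\alpha}X$ by construction.

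For $y''$, I would produce a representative $\tilde y''\in\pi_{t-1,\mu+\alpha+\alpha'}Y$ whose image in $\pi_{t-1,\mu}Y[\tau^{-1}]$ agrees with $i_*\bar x$. The composition of the unit $\Sigma^{0,-\alpha}X\to\Sigma^{0,-\alpha}X\wedge C\tau^{\alpha'}$ with the first map of (\ref{diag}) gives a map $\Sigma^{0,-\alpha}X\to Y\wedge C\tau^{\alpha+\alpha'}$ into which $\tilde x$ feeds, and an additional octahedral manipulation against the $\tau$-Bockstein fiber sequence for $Y$ extracts $\tilde y''$ as a genuine lift at filtration $\mu+\alpha+\alpha'$ of $-i_*\bar x$ (up to sign). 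The class $-y''\in E^1_{t-1,\mu+\alpha+\alpha'}(Y)$ is the reduction of $\tilde y''$ modulo $\tau$. The differential $d_{\alpha+\alpha'}^Z([p_*y])=[p_*y'']$ and the non-triviality of $[p_*y'']$ on the $E^{\alpha'+1}$-page both follow by post-composing the construction with $p_*$ and invoking the maximality of $\alpha'$ from Theorem \ref{2}, and the detection dichotomy for $i_*\bar x$ is handled identically to that of $\bar x$.

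The main obstacle will be the extraction of $\tilde y''$ at the correct filtration and with the correct sign, since the natural composite above naturally produces an element of $\pi_{t-1,\mu}Y\wedge C\tau^{\alpha+\alpha'}$ rather than of $\pi_{t-1,\mu+\alpha+\alpha'}Y$: combining (\ref{diag}) with the $\tau$-Bockstein fiber sequence $\Sigma^{0,-\alpha-\alpha'}Y\to Y\to Y\wedge C\tau^{\alpha+\alpha'}$ in a single octahedral diagram is what gives the genuine lift $\tilde y''$, and one must keep track of the sign along the way. Once this construction is in hand, independence of $\tilde y''$ from the choice of $\tilde x$ modulo higher filtration is a routine diagram chase, and the fallback branches of both detection statements record exactly the filtration information built into the construction.
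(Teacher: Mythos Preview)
Your outline tracks the paper's argument closely up through the definition of $\bar x$ and the first detection dichotomy, and you correctly identify that the hard step is producing $\tilde y''\in\pi_{t-1,\mu+\alpha+\alpha'}Y$. But there is a genuine gap in how you propose to build it. The element $\tilde y''$ must satisfy \emph{two} independent compatibilities: its image in $\pi_{t-1,\mu}Y[\tau^{-1}]$ must be $-i_*\bar x$ (for the detection statement), and its image in $\pi_{t-1,\mu+\alpha+\alpha'}Z$ must equal the image of $y_{\alpha',\alpha}$ under the map $Y_{\alpha',\alpha}\to\Sigma^{1,-\alpha'-\alpha}Z$ (for the differential $d_{\alpha+\alpha'}^Z([p_*y])=[p_*y'']$). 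Your sequential construction starts from an arbitrary lift $\tilde x$ and tries to manufacture $\tilde y''$ from it; but a lift chosen only to satisfy the first compatibility has no reason to satisfy the second, so ``post-composing with $p_*$'' does not by itself link $p_*y''$ back to $p_*y$. Concretely, the composite $\Sigma^{0,-\alpha}X\to\Sigma^{0,-\alpha}X\wedge C\tau^{\alpha'}\to Y\wedge C\tau^{\alpha+\alpha'}$ you feed $\tilde x$ into sends $\tilde x$ to zero (since $x_{\alpha'}$ is in the image of the boundary of (\ref{diag}) and hence in the kernel of the next map), so that route carries no information.

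The paper resolves this by building a single commutative square
\[
\xymatrix{
\Sigma^{0,-\alpha}X\wedge C\tau^{\alpha'} \ar[r] \ar[d] & \Sigma^{1,-\alpha'-\alpha}X \ar[d] \\
Y_{0,\alpha'+\alpha} \ar[r] & \Sigma^{1,-\alpha'-\alpha}Y
}
\]
and extending it to a map of fiber sequences in both directions: horizontally one gets the $\tau$-Bockstein sequences for $X$ and $Y$, and vertically one gets (\ref{diag}) on the left and the fiber sequence $X\to Y\to Z$ on the right. The vanishing of the image of $y_{\alpha',\alpha}$ in $\pi_{t-2,\mu+\alpha'+\alpha}X$ (equivalent to your hypothesis that $x_{\alpha'}$ lifts) then allows one to invoke a standard diagram-chasing lemma for maps of triangles to produce $\tilde x$ and $\tilde y''$ \emph{simultaneously}, with both compatibilities built in. Your proposed octahedron combines only (\ref{diag}) with the $\tau$-Bockstein for $Y$; it omits the column carrying the fiber sequence $X\to Y\to Z$, which is precisely what ties $\tilde y''$ to the image of $y_{\alpha',\alpha}$ in $Z$ and hence to $p_*y$.
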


\begin{thm}\label{5}
Let $y\in E^1_{t,\mu}(Y)$ be an element that survives to the $E^{\alpha}$-page. Suppose that $y$ can be lifted to $y_{\infty,\alpha}\in\pi_{t,\mu}Y_{\infty,\alpha}$. Then there exist elements
\begin{align*}
\bar z & \in \pi_{t,\mu}Z[\tau^{-1}], \\
x & \in E^1_{t-1,\mu+\alpha}(X), \\
\end{align*}
where $[x]\ne0$ in the $E^{\alpha}$-page, such that
\begin{align*}
& d_{\alpha}^Y([y])=[i_*x], \\
& \bar z\begin{cases}\text{is detected by }p_*y, & \text{if }p_*y\text{ is not the target of a differential in }E^*_{*,*}(Z), \\ \text{is in the image of }\pi_{t,\mu}Z, & \text{if }p_*y\text{ is the target of a differential in }E^*_{*,*}(Z),\end{cases}\\
& j_*\bar z\begin{cases}\text{is detected by }x, & \text{if }x\text{ is not the target of a differential in }E^*_{*,*}(X), \\ \text{is in the image of }\pi_{t-1,\mu+\alpha}X, & \text{if }x\text{ is the target of a differential in }E^*_{*,*}(X).\end{cases}\\
\end{align*}
\[\xymatrix{
j_*\bar z & & & & & \bar z \ar[lllll]_{j_*} \\
x \ar@{=>}[u] \ar[rr]^{i_*} & & i_*x & & & \\
 & & & y \ar[lu]^{d_\alpha^Y} \ar[rr]^{p_*} & & p_*y. \ar@{=>}[uu]
}\]
\end{thm}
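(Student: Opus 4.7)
The approach is to read off $\bar z$ and $x$ as the images of the hypothesized lift $y_{\infty,\alpha}$ under the two canonical maps out of $Y_{\infty,\alpha}$, and then to deduce the three conclusions from one combined octahedral diagram.

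First I unpack the geometry of $Y_{\infty,\alpha}$. Taking $\alpha'=\infty$ in its definition and using $C\tau^{\infty}=S^{0,0}$, $Y_{\infty,\alpha}$ is the fiber of $Z\xrightarrow{\Sigma j}\Sigma^{1,0}X\to\Sigma^{1,0}X\wedge C\tau^{\alpha}$, and the fiber sequence $\Sigma^{0,-\alpha}X\xrightarrow{\tau^{\alpha}}X\to X\wedge C\tau^{\alpha}$ therefore exhibits a pullback square
\begin{equation*}
\xymatrix{
Y_{\infty,\alpha} \ar[r] \ar[d] & \Sigma^{1,-\alpha}X \ar[d]^{\tau^{\alpha}} \\
Z \ar[r]^{\Sigma j} & \Sigma^{1,0}X.
}
\end{equation*}
Moreover, (\ref{diag}) with $\alpha'=\beta=\infty$ delivers the fiber sequence
\begin{equation*}
\Sigma^{0,-\alpha}X\to Y\to Y_{\infty,\alpha}\to\Sigma^{1,-\alpha}X
\end{equation*}
whose connecting map is exactly the top row of the square. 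I define $z\in\pi_{t,\mu}Z$ and $x_{\infty}\in\pi_{t-1,\mu+\alpha}X$ as the images of $y_{\infty,\alpha}$ under the vertical and horizontal maps out of $Y_{\infty,\alpha}$, let $\bar z$ be the image of $z$ in $\pi_{t,\mu}Z[\tau^{-1}]$, and take $x$ to be the image of $x_{\infty}$ in $\pi_{t-1,\mu+\alpha}X\wedge C\tau=E^{1}_{t-1,\mu+\alpha}(X)$.

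With these definitions the three conclusions fall out in turn. By naturality of the fiber sequences defining $Y_{\alpha',\alpha}$, the composite $Y_{\infty,\alpha}\to Z\to Z\wedge C\tau$ agrees with $Y_{\infty,\alpha}\to Y_{0,1}\xrightarrow{p_{*}}Z\wedge C\tau$, so $z$ projects to $p_{*}y$ on the $E^{1}$-page; hence $\bar z$ is detected by $p_{*}y$ when $p_{*}y$ is a permanent cycle and is otherwise simply in the image of $\pi_{t,\mu}Z$ by construction. Commutativity of the pullback square forces $\tau^{\alpha}\cdot x_{\infty}=j_{*}(z)$ in $\pi_{t-1,\mu}X$, so after inverting $\tau$ the class $j_{*}\bar z$ is represented by $x_{\infty}$ in filtration $\mu+\alpha$, and since $x_{\infty}$ reduces to $x$ in $E^{1}$ the detection dichotomy for $x$ follows. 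For the differential $d_{\alpha}^{Y}([y])=[i_{*}x]$, I take the image of $y_{\infty,\alpha}$ in $\pi_{t,\mu}Y\wedge C\tau^{\alpha}$ along $Y_{\infty,\alpha}\to Y_{0,\alpha}$ as the lift of $y$ required by the $\tau$-Bockstein recipe for $d_{\alpha}$, and match its boundary in $\pi_{t-1,\mu+\alpha}Y\wedge C\tau$ with $i_{*}x$ by comparing the octahedral square expressing $Y\to Y_{\infty,\alpha}\to\Sigma^{1,-\alpha}X$ with the one expressing $Y\to Y\wedge C\tau^{\alpha}\to\Sigma^{1,-\alpha}Y$.

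The main obstacle will be this last identification: threading the boundary of (\ref{tau}) smashed with $Y$ through the several compatible octahedral squares so that the boundary of the distinguished lift matches $i_{*}x$ on the nose rather than acquiring a spurious sign or filtration twist. The assertion $[x]\neq 0$ in the $E^{\alpha}$-page is then a by-product: the existence of the lift $x_{\infty}\in\pi_{t-1,\mu+\alpha}X$ guarantees that $x$ is a permanent cycle, and were $[x]=0$ in $E^{\alpha}$ one could adjust $y_{\infty,\alpha}$ by an element from the preceding term $\Sigma^{0,-\alpha}X\to Y\to Y_{\infty,\alpha}$ of the fiber sequence to reduce to the degenerate subcase in which $y$ is already detected by an element of $\pi_{t,\mu}Y$, which is handled separately.
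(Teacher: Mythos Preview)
Your approach is the same as the paper's: define $\bar z$ and $x$ as the images of $y_{\infty,\alpha}$ under the two structure maps $Y_{\infty,\alpha}\to Z$ and $Y_{\infty,\alpha}\to\Sigma^{1,-\alpha}X$, then chase one commutative diagram to read off the differential $d_\alpha^Y([y])=[i_*x]$ and the two detection statements. The paper packages this as a single diagram with the $(-)$ sign decorations; your pullback-square description of $Y_{\infty,\alpha}$ is just an explicit unpacking of that same picture. (One small technical slip: invoking (\ref{diag}) with ``$\alpha'=\beta=\infty$'' is ill-posed because $\alpha'-\beta$ appears in the target; the fiber sequence $Y\to Y_{\infty,\alpha}\to\Sigma^{1,-\alpha}X$ you want is already the one coming from your pullback square, so you do not need (\ref{diag}) here.)

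There is, however, a genuine gap in your argument for $[x]\neq 0$ in the $E^\alpha$-page. Adjusting $y_{\infty,\alpha}$ by an element in the image of $Y\to Y_{\infty,\alpha}$ does not change $x_\infty$, since the composite $Y\to Y_{\infty,\alpha}\to\Sigma^{1,-\alpha}X$ is null; so your proposed modification cannot reduce to the ``degenerate subcase'' $x_\infty=0$. The paper does not spell out this point either, but the correct argument is shorter and different: by the standing setup of Section~\ref{sec3}, $\alpha$ is chosen so that $d_\alpha^Y([y])=[y']$ is a nontrivial differential, and your diagram chase shows $y'=i_*x$. Since $i$ induces a map of spectral sequences, $[x]=0$ in $E^\alpha(X)$ would force $[i_*x]=0$ in $E^\alpha(Y)$, contradicting $[y']\neq 0$. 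That is all that is needed.
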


\begin{rmk}
When we deal with the generalized Adams spectral sequences, if $\alpha=1$, Theorem \ref{5} will reduce to the other part of the classical geometric boundary theorem connecting the algebraic boundary homomorphism and the geometric map.
\end{rmk}

\section{Proofs of the main results}\label{sec4}
In this section, we will prove the five theorems stated in Section \ref{sec3}.
\begin{proof}[Proof of Theorem~\ref{1}]
By the construction of differentials in the $\tau$-Bockstein spectral sequence, we can form a commutative  diagram
\[
\xymatrix{
y & Y_{0,1} \ar[r]^{p} \ar@{-->}@/_2.5pc/[dd]_{d_{\alpha}^Y} & Y_{1,0} \ar@{-->}@/^2.5pc/[dd]^{d_{\alpha}^Z} & p_*y\\
y_{\alpha} & Y_{0,\alpha} \ar[r]^{p} \ar[u] \ar[d]^{(-)} & Y_{\alpha,0} \ar[u] \ar[d]_{(-)} & p_*y_{\alpha}\\
y' & \Sigma^{1,-\alpha}Y_{0,1} \ar[r]^{p} & \Sigma^{1,-\alpha}Y_{1,0}. & p_*y'
}
\]
We can see that $p_*y$ can be lifted to $p_*y_{\alpha}$, which maps to $p_*y'$. Therefore, there is a differential in the $E^{\alpha}$-page:
\[d_{\alpha}^Z([p_*y])=[p_*y'].\qedhere\]
\end{proof}

\begin{proof}[Proof of Theorem~\ref{2}]
By considering the images of $y_{\alpha',\alpha}$, we have the following commutative diagram
\[
\xymatrix{
j_*z & \Sigma^{2,-\alpha'-\alpha}X\wedge C\tau & & \Sigma^{1,-\alpha'-\alpha}Y_{1,0} \ar[ll]_{j} & z \\
x_{\alpha'} & \Sigma^{1,-\alpha}X\wedge C\tau^{\alpha'} \ar[u] \ar[d]_{(-)} & Y_{\alpha',\alpha} \ar[l] \ar[r] \ar[d] \ar[ur]^(0.4){(-)} & Y_{\alpha'+\alpha,0} \ar[u]^{(-)} \ar[ddd] & z_{\alpha'+\alpha} \\
x & \Sigma^{1,-\alpha}X\wedge C\tau \ar[d]_{i} \ar@{-->}@/^3.5pc/[uu]^{d_{\alpha'}^X} & Y_{1,\alpha} \ar[l]_(0.35){(-)} \ar[d] & & \\
i_*x & \Sigma^{1,-\alpha}Y_{0,1} & Y_{0,\alpha} \ar[l]_(0.4){(-)} \ar[d] & & \\
 & y & Y_{0,1} \ar[r]^{p} \ar@{-->}[lu]^{d_{\alpha}^Y} & Y_{1,0}. \ar@{-->}@/_3.5pc/[uuuu]_{d_{\alpha+\alpha'}^Z} & p_*y
}
\]
This implies that $d_{\alpha}^Y([y])=[i_*x]$, $d_{\alpha+\alpha'}^Z([p_*y])=[z]$, and $d_{\alpha'}^X([x])=[j_*z]$.

Now we will show that $[z]\ne0$ in the $E^{\alpha'+1}$-page. Otherwise, $z$ can be lifted to $z_{\alpha'}\in\pi_{t,\mu+\alpha}Y_{\alpha',0}$. Then we have the commutative diagram
\[
\xymatrix@!C=60pt{
& z & \Sigma^{1,-\alpha'-\alpha}Y_{1,0} & \Sigma^{0,-\alpha}Y_{\alpha',0} \ar[l]_(0.45){(-)} \ar[ld]^{\textrm{(\ref{hor})}} & z_{\alpha'} \\
& y_{\alpha',\alpha} & Y_{\alpha',\alpha} \ar[ld] \ar[u]^{(-)} & y'_{\alpha',\alpha} & \\
y_{\alpha} & Y_{0,\alpha} & Y_{\alpha'+1,\alpha},\ar[l] \ar[u]_{\textrm{(\ref{hor})}} & y_{\alpha'+1,\alpha} & \\
}
\]
where $y'_{\alpha',\alpha}$ is the image of $z_{\alpha'}$. Note that $y_{\alpha',\alpha}-y'_{\alpha',\alpha}$ maps to $z-z=0$ in $\pi_{t-1,\mu+\alpha'+\alpha}Y_{1,0}$, and thus can be lifted to $y_{\alpha'+1,\alpha}$, which is also a lift of $y$. Then we arrive at a contradiction to the maximality of $\alpha'$.
\end{proof}

\begin{rmk}
In fact, the non-triviality of $[z]$ in the $E^{\alpha'+1}$-page is as far as we can guarantee. Using the similar method, we can show that $[z]=0$ in the $E^{\alpha'+\beta}$-page ($1\le\beta\le\alpha$) if and only if $y$ can be lifted to an element in $\pi_{t,\mu}Y_{\alpha'+\beta,\alpha+1-\beta}$. 
\end{rmk}

\begin{proof}[Proof of Theorem~\ref{3}]
Extending the diagram
\[
\xymatrix{
\Sigma^{0,-\alpha}X\wedge C\tau^{\alpha'} \ar[r] \ar[d] & \Sigma^{1,-\alpha'-\alpha}X\wedge C\tau^{\alpha''} \ar[d] \\
Y_{0,\alpha'+\alpha} \ar[r] & \Sigma^{1,-\alpha'-\alpha}Y_{0,\alpha''}
}
\]
in $Sp^{\mathbb{Z}^{op}}$ to two maps between fiber sequences, we obtain the following diagram in $h\left(Sp^{\mathbb{Z}^{op}}\right)$:
\[
\xymatrix{
\Sigma^{0,-\alpha}X\wedge C\tau^{\alpha'} \ar[r] \ar[d] & \Sigma^{1,-\alpha'-\alpha}X\wedge C\tau^{\alpha''} \ar[r] \ar[d] & \Sigma^{1,-\alpha}X\wedge C\tau^{\alpha'+\alpha''} \ar[r]^{\textrm{(\ref{tau})}} \ar[d] & \Sigma^{1,-\alpha}X\wedge C\tau^{\alpha'} \ar[d] \\
Y_{0,\alpha'+\alpha} \ar[r] \ar[d] & \Sigma^{1,-\alpha'-\alpha}Y_{0,\alpha''} \ar[r] \ar[d] & \Sigma^{1,0}Y_{0,\alpha''+\alpha'+\alpha} \ar[r]^{\textrm{(\ref{tau})}} & \Sigma^{1,0}Y_{0,\alpha'+\alpha} \\
Y_{\alpha',\alpha} \ar[r] \ar[d]_{\textrm{(\ref{diag})}} & \Sigma^{1,-\alpha'-\alpha}Y_{\alpha'',0} \ar[d] \\
\Sigma^{1,-\alpha}X\wedge C\tau^{\alpha'} \ar[r] & \Sigma^{2,-\alpha'-\alpha}X\wedge C\tau^{\alpha''}.
}
\]
Since $x_{\alpha'}$ can be lifted to $\pi_{t-1,\mu+\alpha}X\wedge C\tau^{\alpha'+\alpha''}$, the image of $y_{\alpha',\alpha}$ in $\pi_{t-2,\mu+\alpha'+\alpha}X\wedge C\tau^{\alpha''}$ must be $0$. By \cite[Lem.~5.1]{borel}, we have the following commutative diagram
\[
\xymatrix{
x' & & \Sigma^{2,-\alpha''-\alpha'-\alpha}X\wedge C\tau \ar[r]^(0.52){-i} & \Sigma^{2,-\alpha''-\alpha'-\alpha}  Y_{0,1} & -i_*x' & \\
x_{\alpha'+\alpha''} & & \Sigma^{1,-\alpha}X\wedge C\tau^{\alpha'+\alpha''} \ar[r] \ar[u] \ar[ddd] & \Sigma^{1,0}Y_{0,\alpha''+\alpha'+\alpha} \ar[u]^{(-)} & & \\
 & & & \Sigma^{1,-\alpha'-\alpha}Y_{0,\alpha''} \ar[r] \ar[u] \ar[d] & \Sigma^{1,-\alpha'-\alpha}Y_{0,1} \ar[d]^{p} \ar@{-->}[luu]_{d_{\alpha''}^Y} & y'' \\
 & & & \Sigma^{1,-\alpha'-\alpha}Y_{\alpha'',0} \ar[r] & \Sigma^{1,-\alpha'-\alpha}Y_{1,0} & p_*y'' \\
x_{\alpha'} & & \Sigma^{1,-\alpha}X\wedge C\tau^{\alpha'} \ar[d]^{(-)} & Y_{\alpha',\alpha} \ar[l] \ar[r] \ar[d] \ar[u]^{(-)} & Y_{\alpha'+\alpha,0} \ar[u]^{(-)} \ar[ddd] & z_{\alpha'+\alpha} \\
x & & \Sigma^{1,-\alpha}X\wedge C\tau \ar[d]_{i} \ar@{-->}`l_u/4pt[l]`/4pt[uuuuu]^{d_{\alpha'+\alpha''}^X}[uuuuu] & Y_{1,\alpha} \ar[l]_(0.4){(-)} \ar[d] & & \\
 & i_*x & \Sigma^{1,-\alpha}Y_{0,1} & Y_{0,\alpha} \ar[l]_(0.44){(-)} \ar[d] & & \\
 & & y & Y_{0,1} \ar[r]^{p} \ar@{-->}[lu]^{d_{\alpha}^Y} & Y_{1,0}, \ar@{-->}@/_3.5pc/[uuuu]_{d_{\alpha+\alpha'}^Z} & p_*y
}
\]
from which the conclusion follows. We can prove that $[x']\ne0$ in the $E^{\alpha''+1}$-page in a similar way as in the proof of Theorem \ref{2}, using the fact that $\alpha''$ is maximal such that $x_{\alpha'}$ can be lifted to $\pi_{t-1,\mu+\alpha}X\wedge C\tau^{\alpha'+\alpha''}$.
\end{proof}

\begin{proof}[Proof of Theorem~\ref{4}]
Extending the diagram
\[
\xymatrix{
\Sigma^{0,-\alpha}X\wedge C\tau^{\alpha'} \ar[r] \ar[d] & \Sigma^{1,-\alpha'-\alpha}X\wedge C\tau^{\alpha''} \ar[d] \\
Y_{0,\alpha'+\alpha} \ar[r] & \Sigma^{1,-\alpha'-\alpha}Y
}
\]
in $Sp^{\mathbb{Z}^{op}}$ to two maps between fiber sequences, we obtain the following diagram in $h\left(Sp^{\mathbb{Z}^{op}}\right)$:
\[
\xymatrix{
\Sigma^{0,-\alpha}X\wedge C\tau^{\alpha'} \ar[r] \ar[d] & \Sigma^{1,-\alpha'-\alpha}X \ar[r] \ar[d] & \Sigma^{1,-\alpha}X \ar[r] \ar[d] & \Sigma^{1,-\alpha}X\wedge C\tau^{\alpha'} \ar[d] \\
Y_{0,\alpha'+\alpha} \ar[r] \ar[d] & \Sigma^{1,-\alpha'-\alpha}Y \ar[r] \ar[d] & \Sigma^{1,0}Y \ar[r] & \Sigma^{1,0}Y_{0,\alpha'+\alpha} \\
Y_{\alpha',\alpha} \ar[r] \ar[d]_{\textrm{(\ref{diag})}} & \Sigma^{1,-\alpha'-\alpha}Z \ar[d] \\
\Sigma^{1,-\alpha}X\wedge C\tau^{\alpha'} \ar[r] & \Sigma^{2,-\alpha'-\alpha}X.
}
\]
Since $x_{\alpha'}$ can be lifted to $\pi_{t-1,\mu+\alpha}X$, the image of $y_{\alpha',\alpha}$ in $\pi_{t-2,\mu+\alpha'+\alpha}X$ must be $0$. By \cite[Lem.~5.1]{borel}, we have the following commutative diagram
\[
\xymatrix{
\bar x & \Sigma^{1,0}X[\tau^{-1}] \ar[r]^{-i} & \Sigma^{1,0}Y[\tau^{-1}] & -i_*\bar x & \\
 & \Sigma^{1,-\alpha}X \ar[u]_{(-)} \ar[r] \ar[ddd] & \Sigma^{1,0}Y \ar[u] & & \\
 & & \Sigma^{1,-\alpha'-\alpha}Y \ar[r] \ar[u] \ar[d] & \Sigma^{1,-\alpha'-\alpha}Y_{0,1} \ar[d]^{p} \ar@{=>}[luu] & y'' \\
 & & \Sigma^{1,-\alpha'-\alpha}Z \ar[r] & \Sigma^{1,-\alpha'-\alpha}Y_{1,0} & p_*y'' \\
x_{\alpha'} & \Sigma^{1,-\alpha}X\wedge C\tau^{\alpha'} \ar[d]^{(-)} & Y_{\alpha',\alpha} \ar[l] \ar[r] \ar[d] \ar[u]^{(-)} & Y_{\alpha'+\alpha,0} \ar[u]^{(-)} \ar[ddd] & z_{\alpha'+\alpha} \\
x & \Sigma^{1,-\alpha}X\wedge C\tau \ar[d]_{i} \ar@{=>}@/^5pc/[uuuuu] & Y_{1,\alpha} \ar[l]_(0.36){(-)} \ar[d] & & \\
i_*x & \Sigma^{1,-\alpha}Y_{0,1} & Y_{0,\alpha} \ar[l]_(0.41){(-)} \ar[d] & & \\
 & y & Y_{0,1} \ar[r]^{p} \ar@{-->}[lu]^{d_{\alpha}^Y} & Y_{1,0}, \ar@{-->}@/_3.5pc/[uuuu]_{d_{\alpha+\alpha'}^Z} & p_*y
}
\]
from which the result follows.
\end{proof}

\begin{proof}[Proof of Theorem~\ref{5}]
By considering the images of $y_{\infty,\alpha}$, we have the following commutative diagram
\[
\xymatrix{
j_*\bar z & \Sigma^{1,0}X[\tau^{-1}] & & Z[\tau^{-1}] \ar[ll]_j & \bar z \\
 & \Sigma^{1,0}X \ar[u] & & Z \ar[u] \ar[ll]_{j} \ar[dddd] &  \\
 & \Sigma^{1,-\alpha}X \ar[u]_{(-)} \ar[d]^{(-)} & Y_{\infty,\alpha} \ar[l] \ar[d] \ar[ur] & & \\
x & \Sigma^{1,-\alpha}X\wedge C\tau \ar[d]_{i} \ar@{=>}@/^3pc/[uuu] & Y_{1,\alpha} \ar[l]_(0.32){(-)} \ar[d] & & \\
y' & \Sigma^{1,-\alpha}Y_{0,1} & Y_{0,\alpha} \ar[l]_(0.38){(-)} \ar[d] & & \\
 & y & Y_{0,1} \ar[r]^{p} \ar@{-->}[lu]^{d_{\alpha}^Y} & Y_{1,0}, \ar@{=>}@/_3pc/[uuuuu] & p_*y
}
\]
from which the result follows.
\end{proof}

\bibliographystyle{alpha}
\bibliography{gbd}
\end{document}